\newcommand\cyr{%
\renewcommand\rmdefault{wncyr}%
\renewcommand\sfdefault{wncyss}%
\renewcommand\encodingdefault{OT2}%
\normalfont
\selectfont}
\DeclareTextFontCommand{\textcyr}{\cyr}
\DeclareFontFamily{OT1}{rsfs}{}
\DeclareFontShape{OT1}{rsfs}{n}{it}{<-> rsfs10}{}
\DeclareMathAlphabet{\mathscr}{OT1}{rsfs}{n}{it}
\numberwithin{equation}{section}
\newtheorem{theorem}{Theorem}[section]
\newtheorem{lemma}[theorem]{Lemma}
\newtheorem{proposition}[theorem]{Proposition}
\newtheorem{question}{Question}
\theoremstyle{definition}
\newtheorem{definition}[theorem]{Definition}
\newtheorem{remark}[theorem]{Remark}
\theoremstyle{remark}
\newtheorem{acknowledgement}{Acknowledgement}
\newcommand{\Ass}{\operatorname{Ass}}
\newcommand{\im}{\operatorname{Im}}
\renewcommand{\ker}{\operatorname{Ker}}
\newcommand{\Spec}{\operatorname{Spec}}
\newcommand{\Ann}{\operatorname{Ann}}
\newcommand{\fm}{\frak{m}}
\newcommand{\fp}{\frak{p}}
\newcommand{\fq}{\frak{q}}
\begin{document}
\title[Frobenius test exponents]{Frobenius test exponent for ideals generated by filter regular sequences}

\author[Duong Thi Huong]{Duong Thi Huong}
\address{Department of Mathematics, Thang Long University, Hanoi, Vietnam}
\email{huongdt@thanglong.edu.vn}

\author[Pham Hung Quy]{Pham Hung Quy}
\address{Department of Mathematics, FPT University, Hanoi, Vietnam}
\email{quyph@fe.edu.vn}

\thanks{2020 {\em Mathematics Subject Classification\/}: 13A35, 13D45.\\
The authors are partially supported by a fund of Vietnam National Foundation for Science
and Technology Development (NAFOSTED) under grant number
101.04-2020.10.}

\keywords{The Frobenius test exponent, The Hartshorne-Speiser-Lyubeznik number, Local cohomology, Filter regular sequence.}

\maketitle
\begin{center}
{\textit{Dedicated to Professor Nguyen Tu Cuong on the occasion of his 70th birthday}}
\end{center}

\begin{abstract} Let $(R,\fm)$ be a Noetherian local ring of prime characteristic $p>0$, and $t$ an integer such that $H_{\fm}^j(R)/0^F_{H^j_{\fm}(R)}$ has finite length for all $j<t$. The aim of this paper is to show that there exists an uniform bound for Frobenius test exponents of ideals generated by filter regular sequences of length at most $t$. 
\end{abstract}

\section{Introduction}
Throughout this paper, let $R$ be a Noetherian commutative ring of prime characteristic $p>0$, and $I$ an ideal of $R$. The key ingredient in study ring of prime characteristic is using the Frobenius endomorphism $F: R \to R; x \mapsto x^p$ and its $e$-th iterations $F^e$, $e \ge 1$. The {\it $e$-th Frobenius power} of $I$  is the extension of $I$ via $F^e$, $I^{[p^e]} = (x^{p^e} \mid x \in I)$. The {\it Frobenius closure} of $I$ is $I^F = \{x \mid x^{p^e} \in I^{[p^e]} \text{ for some } e \ge 0\}$. Hence $I^F$ is the set of all nilpotent elements under Frobenius endomorphism modulo $I$. By the Noetherianess of $R$ there is an integer $e$, depending on $I$, such that $(I^F)^{[p^e]} = I^{[p^e]}$. The {\it Frobenius test exponent} of $I$ is defined by $\mathrm{Fte}(I) = \min \{e \mid (I^F)^{[p^e]} = I^{[p^e]}\}$. Under mild conditions, $R$ is $F$-pure if and only if $\mathrm{Fte}(I) = 0$ for all $I$ by Hochster \cite{H77}. In general, we can not expect an upper bound for all $\mathrm{Fte}(I)$ by Brenner \cite{B06}. Restricting to the class of parameter ideals, for any local ring $(R, \fm)$ of dimension $d$ we define the {\it Frobenius test exponent for parameter ideals} $\mathrm{Fte}(R)$ as follows
$$\mathrm{Fte}(R) = \min \{e \mid (\fq^F)^{[p^e]} = \fq^{[p^e]} \, \text{for all parameter ideals }\, \fq\},$$
 and $\mathrm{Fte}(R) = \infty$ if we have no such integer. Katzman and Sharp asked whether $\mathrm{Fte}(R) < \infty$ for any (equidimensional) local ring. Moreover they \cite{KS06} proved that it is the case when the ring is Cohen-Macaulay.  The main idea in \cite{KS06} is connecting $\mathrm{Fte}(R)$ with an invariant defined by the Frobenius actions on the local cohomology modules $H^i_{\fm}(R)$, namely the {\it Hartshorne-Speiser-Lyubeznik number} of $H^i_{\fm}(R)$.  Recall that the Frobenius endomorphism induces natural Frobenius actions on the local cohomology modules $H^i_{\fm}(R)$ for all $i \ge 0$.  The {\it Frobenius closure of zero submodule} of $H^i_{\fm}(R)$, denoted by $0^F_{H^i_{\fm}(R)}$, consists nilpotent elements of $H^i_{\fm}(R)$ under the Frobenius action. The Hartshorne-Speiser-Lyubeznik number of $H^i_{\fm}(R)$ is defined by $$\mathrm{HSL}(H^i_{\fm}(R)) = \min \{e \mid 0^F_{H^i_{\fm}(R)} = \mathrm{Ker}(H^i_{\fm}(R) \overset{F^e}{\longrightarrow} H^i_{\fm}(R))\}.$$ 
It is proved that $\mathrm{HSL}(H^i_{\fm}(R))$ always exists. The Hartshorne-Speiser-Lyubeznik number of $R$ is $\mathrm{HSL}(R) = \max \{\mathrm{HSL}(H^i_{\fm}(R)) \mid i = 0, \ldots, d\}$. In fact Katzman and Sharp proved that $\mathrm{Fte}(R) = \mathrm{HSL} (R)$ provided $R$ is Cohen-Macaulay. In general, we have $\mathrm{Fte}(R)\geq \mathrm{HSL}(R)$ for any local ring by \cite{HQ19}. Beyond the Cohen-Macaulay case, Huneke, Katzman, Sharp, and Yao \cite{HKSY06} showed that $\mathrm{Fte}(R) < \infty$ for generalized Cohen-Macaulay local rings by using several concepts and techniques from commutative algebra, namely unconditioned strong d-sequences, cohomological annihilators, and modules of generalized fractions. In 2019, the second author \cite{Q19} not only simplified the proof for generalized Cohen-Macaulay rings but also proved $\mathrm{Fte}(R)<\infty$ for {\it weakly F-nilpotent} rings, i.e. $H^i_{\fm}(R) = 0^F_{H^i_{\fm}(R)}$ for all $i<d$. Recently, Maddox \cite{M19} extended this result for {\it generalized weakly F-nilpotent} rings, i.e. $H^i_{\fm}(R)/0^F_{H^i_{\fm}(R)}$ has finite length for all $i<d$.
 
Suppose $\mathrm{Fte}(R) < \infty$, and let $x_1, \ldots, x_d$ be a system of parameters of $R$. Then for any $t\le d$ we have $\mathrm{Fte}((x_1, \ldots, x_t)) \le \mathrm{Fte} (R)$ by using Krull's intersection theorem. If we do not know about the finiteness of $\mathrm{Fte}(R)$, in which conditions we have $\mathrm{Fte}((x_1, \ldots, x_t)) < \infty$? For a regular sequence $\underline{x}=x_1,\ldots ,x_t$, Katzman and Sharp \cite[Corollary 4.3]{KS06} showed that there exists an integer $C_{\underline{x}}$ such that $\mathrm{Fte}((x_1^{n_1},\ldots,x_t^{n_t}))\le C_{\underline{x}}$ for all $n_1,\ldots,n_t \ge 1$. Since $C_{\underline{x}}$ depends on the choice of $\underline{x}$, the previous result does not answer for the following question.
\begin{question} Let $(R, \fm)$ be a local ring of prime characteristic and of dimension $d$, and $t \le \mathrm{depth}(R)$ an integer. Does there exist a positive integer $C$ such that for any regular sequence $x_1, \ldots, x_t$ we have $\mathrm{Fte} ((x_1, \ldots, x_t)) \le C$?
\end{question}
Inspired by known results about the finiteness of $\mathrm{Fte}(R)$, we give an affirmative answer for the above question in a more general form. It covers all aforementioned results.
\begin{theorem}\label{main thm}
Let $(R, \fm)$ be a Noetherian local ring of dimension $d$ and of prime characteristic $p>0$, and $t \le d$ a positive integer such that $H_{\fm}^j(R)/0^F_{H^j_{\fm}(R)}$ has finite length for all $j<t$. Then there exists a positive integer $C$ such that for any filter regular sequence $x_1,\ldots,x_t$ we have $\mathrm{Fte}((x_1, \ldots, x_t)) \leq C$.
\end{theorem}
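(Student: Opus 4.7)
The plan is to induct on $t$. For the inductive step, I pass from $R$ with filter regular sequence $x_1,\ldots,x_t$ to $\bar R := R/(x_1)$ with the induced filter regular sequence $\bar x_2,\ldots,\bar x_t$ (of length $t-1$); note that $\bar R / (\bar x_2,\ldots,\bar x_{j-1}) = R/(x_1,\ldots,x_{j-1})$ makes this descent immediate, so filter regularity in $R$ transfers to filter regularity in $\bar R$.

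The starting point is the short exact sequence
\[
0 \to R/(0:_R x_1) \xrightarrow{\,\cdot x_1\,} R \to \bar R \to 0,
\]
together with the filter regularity of $x_1$, which gives $0 :_R x_1 \subseteq H^0_\fm(R)$ (a module of finite length modulo $0^F$ by hypothesis). The associated long exact sequence in local cohomology then produces Frobenius-compatible short exact sequences
\[
0 \to H^j_\fm(R)/x_1 H^j_\fm(R) \to H^j_\fm(\bar R) \to \bigl(0 :_{H^{j+1}_\fm(R)} x_1\bigr) \to 0
\]
for $j \ge 1$, with a similar analysis controlling degree zero. From this I can verify that $\bar R$ inherits the hypothesis of the theorem at length $t-1$: for $j < t-1$ both outer terms have a finite-length $0^F$-quotient (both indices lie in the range $< t$), so the same is true for $H^j_\fm(\bar R)$. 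Applying the inductive hypothesis to $\bar R$ yields a bound $C'$ on $\mathrm{Fte}_{\bar R}((\bar x_2,\ldots,\bar x_t))$; combining with a base-case bound on $\mathrm{Fte}_R((x_1))$ and a colon-ideal iteration that leverages the filter regularity of $x_1$ gives the desired bound on $\mathrm{Fte}_R((x_1,\ldots,x_t))$. The base case $t=1$ is extracted directly from the displayed long exact sequence in low degrees together with the $j=0$ hypothesis, producing a bound in terms of $\mathrm{HSL}(H^0_\fm(R))$, $\mathrm{HSL}(H^1_\fm(R))$, and the length of $H^0_\fm(R)/0^F_{H^0_\fm(R)}$.

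The principal obstacle is the \emph{uniformity} of this bound across all filter regular sequences, since the inductive constant $C'$ produced by passing to $\bar R$ a priori depends on $\bar R$, hence on the chosen element $x_1$. To collapse this dependence, I would need to control the HSL numbers of the subquotients appearing as $H^j_\fm(\bar R)$ (namely $H^j_\fm(R)/x_1 H^j_\fm(R)$ and $(0:_{H^{j+1}_\fm(R)} x_1)$) uniformly in $x_1$, by invariants of $R$ alone — specifically, $\mathrm{HSL}(H^j_\fm(R))$, $\mathrm{HSL}(H^{j+1}_\fm(R))$, and the lengths of $H^j_\fm(R)/0^F$ for $j < t$. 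The critical observation that should make this work is that the Frobenius action on $H^j_\fm(R)/0^F_{H^j_\fm(R)}$ is injective, so all ``nilpotent'' contributions to these subquotients already live inside the fixed finite-length modules $H^j_\fm(R)/0^F$ with $j < t$; a Noetherian argument on this ambient finite-length module then extracts a single uniform Frobenius-nilpotency exponent that works for every filter regular $x_1$. Organising this uniformity bookkeeping — and handling the boundary degree $j = t-1$, where finiteness of $H^j_\fm(\bar R)/0^F$ is only available after passage to $\bar R$ — is where I expect the main technical difficulty to lie.
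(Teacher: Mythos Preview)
Your inductive scheme has two concrete gaps that the sketch does not close.

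\textbf{The combination step.} Even granting a uniform bound $C'$ for $\mathrm{Fte}_{\bar R}((\bar x_2,\ldots,\bar x_t))$, for $a\in I^F$ with $I=(x_1,\ldots,x_t)$ you only obtain $a^{p^{C'}}\in (x_1)+(x_2^{p^{C'}},\ldots,x_t^{p^{C'}})$, \emph{not} $a^{p^{C'}}\in I^{[p^{C'}]}$. Writing $a^{p^{C'}}=x_1 b+c$ and trying to iterate on $b$ does not terminate in a number of steps bounded independently of $a$ and the sequence; the phrase ``colon-ideal iteration'' hides this. This already blocks the induction on $t$, before any uniformity question about $C'$ arises.

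\textbf{The Frobenius compatibility.} The short exact sequence you write is not equivariant for the ordinary Frobenius on all three terms: since $F(x_1 r)=x_1^p r^p$, the connecting map $H^j_\fm(\bar R)\to H^{j+1}_\fm(R)$ intertwines $F_{\bar R}$ only with the twisted action $x_1^{p-1}F$ on the target. Thus the nilpotent locus on $(0:_{H^{j+1}_\fm(R)}x_1)$ relevant to your argument is taken with respect to this twist and genuinely depends on $x_1$; your ``Noetherian argument on the ambient finite-length module'' does not see this. More fundamentally, to run your induction uniformly you would have to replace the existential statement by an explicit formula for $C'$ in invariants of $\bar R$ and then bound those invariants by invariants of $R$; you never specify the strengthened inductive hypothesis.

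The paper proceeds quite differently. It works throughout with the \emph{relative} Frobenius $F_R:R/K\to R/K^{[p]}$ (so the target changes with $e$ and the diagrams honestly commute), and it does not induct on $t$ or pass to $R/(x_1)$. First (Lemma~3.1, Proposition~3.3) it proves a uniform bound $c$ on $\mathrm{HSL}_R(H^0_\fm(R/I))$ over all filter regular $I=(x_1,\ldots,x_t)$; this settles the case $\dim R/(I:a)=0$. For general $a\in I^F$ it inducts on $s=\dim R/(I:a)$: at a minimal prime $\fp$ of $R/(I:a)$ of maximal dimension the sequence becomes \emph{regular} in $R_\fp$, so the regular case and the fact that HSL numbers drop under localisation (Lemma~3.5, after reducing to the $F$-finite case via completion and the $\Gamma$-construction) give $a^{p^h}\in I^{[p^h]}R_\fp$ with $h=\mathrm{HSL}(R)$. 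One step of $F^h$ therefore lowers $s$ by one, and after at most $d-t$ steps one lands in the zero-dimensional case, yielding $\mathrm{Fte}(I)\le (d-t)h+c$. The localisation-to-regular trick is exactly what supplies the uniformity that your quotient approach lacks.
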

In the next section we recall the basic notions and relevant materials. We prove the main theorem in the last section.
\section{Preliminaries}
\subsection{Filter regular sequences}
In this subsection, the assumption that $R$ is of prime characteristic is unnecessary.
\begin{definition}
Let $(R,\fm)$ be a Noetherian local ring, and $x_1,\ldots, x_t$ a sequence of elements of $R$. Then we say that $x_1,\ldots, x_t$ is a {\it filter regular sequence} if the following conditions hold:
\begin{enumerate}
\item $(x_1,\ldots, x_t)\subseteq \fm$,
\item $x_i\notin \fp$ for all $\fp\in \Ass \big(\frac{M}{(x_1,\ldots,x_{i-1})M}\big)\setminus \{\fm\}$, $i=1,\ldots,t$.
\end{enumerate}
\end{definition}
The notion of filter regular sequences was introduced by Cuong, Schenzel, and Trung in \cite{CST78}. The following is well-known. 
\begin{lemma}
Let $(R,\fm)$ be a Noetherian local ring and $x_1,\ldots, x_t\subseteq \fm$. The following are equivalent
\begin{enumerate}
\item $x_1,\ldots,x_t$ is a filter regular sequence.
\item For each $1\le i\le t$ the quotient
$$\frac{(x_1,\ldots,x_{i-1}):_Rx_i}{(x_1,\ldots,x_{i-1})}$$
is an $R$-module of finite length.
\item For each $1\le i\le t$ the sequence
$$\frac{x_1}{1},\frac{x_2}{1},\ldots,\frac{x_i}{1}$$
forms an $R_{\fp}$-regular sequence for every $\fp\in \Spec (R/(x_1,\ldots,x_i)) \setminus \{\fm\}$.
\item The sequence $x_1^{n_1},\ldots,x_t^{n_t}$ is a filter regular sequence for all $n_1,\ldots,n_t\geq 1$.
\item The sequence $x_1,\ldots, x_t$ is a filter regular sequence of $\widehat{R}$, where $\widehat{R}$ is the $\fm$-adic completion of $R$.
\end{enumerate} 
\end{lemma}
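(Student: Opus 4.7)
The plan is to establish the five equivalences with (1) at the center, exploiting two recurring principles: a finitely generated $R$-module $N$ over a local ring has finite length iff $\Ass(N) \subseteq \{\fm\}$, and associated primes behave well under localization and faithfully flat base change. For (1) $\Leftrightarrow$ (2), I would set $N_i = ((x_1,\ldots,x_{i-1}):_R x_i)/(x_1,\ldots,x_{i-1})$ and observe that multiplication by $x_i$ identifies $N_i$ with the submodule of $R/(x_1,\ldots,x_{i-1})$ annihilated by $x_i$; hence $\Ass(N_i)$ is exactly the set of primes in $\Ass(R/(x_1,\ldots,x_{i-1}))$ that contain $x_i$. Then $N_i$ has finite length iff all such primes equal $\fm$, which is precisely the filter regular condition at step $i$.

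For (1) $\Leftrightarrow$ (3), I would use commutation of $\Ass$ with localization: for $\fp \in \Spec(R/(x_1,\ldots,x_i)) \setminus \{\fm\}$, the associated primes of $R_\fp/(x_1,\ldots,x_{j-1})R_\fp$ are the extensions $\fq R_\fp$ of primes $\fq \in \Ass(R/(x_1,\ldots,x_{j-1}))$ with $\fq \subseteq \fp$. Since each such $\fq \ne \fm$, condition (1) gives $x_j \notin \fq$, making $x_j/1$ a nonzerodivisor on $R_\fp/(x_1,\ldots,x_{j-1})R_\fp$. Conversely, a hypothetical $\fq \in \Ass(R/(x_1,\ldots,x_{i-1})) \setminus \{\fm\}$ containing $x_i$ would contradict (3) after localization at $\fq$. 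For (1) $\Leftrightarrow$ (4), the backward direction is immediate by specializing to all $n_j=1$; for the forward direction the cleanest route is through (3), since the ideals $(x_1,\ldots,x_i)$ and $(x_1^{n_1},\ldots,x_i^{n_i})$ have the same radical and hence the same set of non-maximal primes above them, while over each such $R_\fp$ a regular sequence remains regular after replacing entries by arbitrary powers.

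For (1) $\Leftrightarrow$ (5), I would invoke the standard formula $\Ass_{\widehat R}(M \otimes_R \widehat R) = \bigcup_{\fp \in \Ass_R(M)} \Ass_{\widehat R}(\widehat R/\fp\widehat R)$ with $M = R/(x_1,\ldots,x_{i-1})$: each $\fP \in \Ass_{\widehat R}(\widehat R/\fp\widehat R)$ contracts to $\fp$, so $\fP = \fm\widehat R$ iff $\fp = \fm$, and $x_i \in \fP$ iff $x_i \in \fp$ because $x_i \in R$. Hence the filter regular condition transfers verbatim between $R$ and $\widehat R$. The main obstacle I anticipate is keeping (1) $\Leftrightarrow$ (4) clean: a direct analysis of $\Ass(R/(x_1^{n_1},\ldots,x_{i-1}^{n_{i-1}}))$ is awkward because this set changes with the exponents, so routing through the localization characterization (3) is essential; similarly (5) is much cleaner via the associated-primes dictionary than by a direct comparison of the quotients of $R$ and $\widehat R$.
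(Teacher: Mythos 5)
The paper does not prove this lemma: it labels it ``well-known'' and points to Cuong--Schenzel--Trung, so there is no paper proof to compare against. Your argument is a correct, self-contained proof by the standard route. The central identity you use in (1) $\Leftrightarrow$ (2), namely $\Ass(N_i)=\{\fp\in\Ass(R/(x_1,\ldots,x_{i-1})):x_i\in\fp\}$ together with ``finite length $\Leftrightarrow$ $\Ass\subseteq\{\fm\}$'' for a finitely generated module, is exactly right; one small phrasing slip is that $N_i=((x_1,\ldots,x_{i-1}):_Rx_i)/(x_1,\ldots,x_{i-1})$ is not ``identified via multiplication by $x_i$'' with $(0:_{R/(x_1,\ldots,x_{i-1})}x_i)$ --- it simply \emph{is} that submodule under the natural inclusion. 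The localization dictionary for (1) $\Leftrightarrow$ (3) is handled correctly in both directions (for the converse, a bad $\fq\in\Ass(R/(x_1,\ldots,x_{i-1}))\setminus\{\fm\}$ with $x_i\in\fq$ does lie over $(x_1,\ldots,x_i)$, so (3) applies at $\fq$ and gives the contradiction). Routing (1) $\Rightarrow$ (4) through (3) using equality of radicals and the fact that powers of a regular sequence remain a regular sequence is clean and avoids the awkward comparison of associated primes you flag. For (1) $\Leftrightarrow$ (5) the flat base change formula $\Ass_{\widehat R}(\widehat R/\fp\widehat R)$ consisting of primes contracting to $\fp$, combined with $x_i\in R$, does transfer the condition verbatim; one might add the one-line justification that such a $\fP$ contracts to $\fp$ because $B/\fp B$ embeds into $(B/\fp B)_{A\setminus\fp}$ when $A\to B$ is flat, but this is standard. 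In short: correct proof, supplying what the paper leaves to the reader.
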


\subsection{The Frobenius action on local cohomology}
In this subsection, let $R$ be a Noetherian ring containing a field of characteristic $p>0$. Let $F:R \to R, x \mapsto x^p$ denote the Frobenius endomorphism. If we want to notationally distinguish the source and target of the $e$-th Frobenius endomorphism $F^e: R \xrightarrow{x \mapsto x^{p^e}} R$, we will use $F_*^e(R)$ to denote the target. $F_*^e(R)$ is an $R$-bimodule, which is the same as $R$ as
an abelian group and as a right $R$-module, that acquires its left $R$-module structure via the $e$-th Frobenius
endomorphism $F^e$. By definition the $e$-th Frobenius endomorphism $F^e: R \to F_*^e(R)$ sending $x$ to $F_*^e(x^{p^e}) = x \cdot F_*^e(1)$ is an $R$-homomorphism. We say $R$ is {\it $F$-finite} if $F_*(R)$ is a finite $R$-module.
\begin{definition} Let $I$ be an ideal of $R$, we define
\begin{enumerate}
\item The {\it $e$-th Frobenius power} of $I$ is $I^{[p^e]} = (x^{p^e} \mid x \in I)$.
  \item The {\it Frobenius closure} of $I$, $I^F = \{x \mid  x^{p^e} \in I^{[p^e]} \text{ for some } e \ge 0\}$.
\end{enumerate}
\end{definition}
\begin{definition}
Let $I$ be an ideal of $R$. By the Noetherianess of $R$ there is an integer $e$, depending on $I$ such that $(I^F)^{[p^e]}=I^{[p^e]}$. The smallest number $e$ satisfying the condition is called the {\it Frobenius test exponent of $I$}, and denoted by $\mathrm{Fte}(I)$,
$$\mathrm{Fte}(I)=\min \{e\mid (I^F)^{[p^e]}=I^{[p^e]}\}.$$
\end{definition}
A problem of Katzman and Sharp \cite[Introduction]{KS06} asks in its strongest form: does there exist an integer $e$, depending only on the ring $R$, such that for every ideal $I$ we have $(I^F)^{[p^e]} = I^{[p^e]}$. A positive answer to
this question, together with the actual knowledge of a bound for $e$, would give an algorithm to compute the Frobenius closure $I^F$. Unfortunately, Brenner \cite{B06} gave two-dimensional normal standard graded domains with no Frobenius test exponent. In contrast, Katzman and Sharp showed the existence of Frobenius test exponent if we restrict to class of parameter ideals in a Cohen-Macaulay ring. Therefore it is natural to ask the following question.
\begin{question}\label{Katzman Sharp} Let $(R, \fm)$ be an (equidimensional) local ring of prime characteristic $p$. Then does there exist an integer $e$ such that for every parameter ideal $\fq$ of $R$ we have $(\fq^F)^{[p^e]} = \fq^{[p^e]}$?
\end{question}
We define the {\it Frobenius test exponent for parameter ideals} of $R$, $\mathrm{Fte}(R)$, the smallest integer $e$ satisfying the above condition and $\mathrm{Fte}(R) = \infty$ if we have no such $e$. It should be noted that the authors recently used the finiteness of $\mathrm{Fte}(R)$, if have, to find an upper bound of the multiplicity of a local ring \cite{HQ20}.\\
For any ideal $I = (x_1, \ldots, x_t)$, the Frobenius endomorphism $F:R \to R$ and its localizations induce a natural Frobenius action on local cohomology $F:H^i_I(R) \to H^i_{I^{[p]}}(R) \cong H^i_{I}(R)$ for all $i \ge 0$. In general, let $A$ be an Artinian $R$-module with a Frobenius action $F: A \to A$. Then we define the {\it Frobenius closure $0^F_A$ of the zero submodule} of $A$ is the submodule of $A$ consisting all elements $z$ such that $F^e(z) = 0$ for some $e \ge 0$. Hence $0^F_A$ is the nilpotent part of $A$ under the Frobenius action. By \cite[Proposition 1.11]{HS77}, \cite[Proposition 4.4]{L97} and \cite{Sh07} we have the following.
\begin{theorem}
Let $(R, \fm)$ be a local ring of prime characteristic $p>0$, and $A$ an Artinian $R$-module with a Frobenius action $F: A \to A$. Then there exists a non-negative integer $e$ such that $0^F_A = \ker (A \overset{F^e}{\longrightarrow} A)$.
\end{theorem}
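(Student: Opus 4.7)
The plan is to turn the ascending-chain problem for $K_e := \ker(F^e \colon A \to A)$ into a descending-chain problem that the Artinian hypothesis on $A$ can resolve directly. Since $\bigcup_e K_e = 0^F_A$ holds by definition, the assertion amounts to showing this ascending chain stabilizes.

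First, I would exploit Artinianness on the \emph{descending} chain of images $B_e := \im(F^e) \subseteq A$. It stabilizes at some $B := B_{e_0} = B_{e_0 + 1} = \cdots$, and then $F|_B \colon B \to B$ is surjective. Moreover, $F^{e_0}$ factors through an isomorphism $\phi \colon A/K_{e_0} \xrightarrow{\sim} B$ that intertwines the Frobenius actions (the inclusion $F(K_{e_0}) \subseteq K_{e_0}$ makes the left-hand Frobenius well defined).

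Next, I would reduce the theorem to the single claim $L_1 := K_1 \cap B = 0$. Under $\phi$, for every $e \ge e_0$ the quotient $K_e / K_{e_0}$ corresponds to $L_{e - e_0} := \ker(F^{e - e_0}|_B)$, so stabilization of $\{K_e\}$ is equivalent to $L_j = 0$ for all $j \ge 1$. The condition $L_1 = 0$ says exactly that $F|_B$ is injective, whence each iterate $F^j|_B$ is injective and $L_j = 0$ follows automatically.

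The main obstacle is proving $L_1 = 0$. Using surjectivity of $F|_B$, successive $F$-preimages show $L_{j+1}/L_1 \cong L_j$, so if $L_1 \neq 0$ one builds a strictly ascending chain $L_1 \subsetneq L_2 \subsetneq \cdots$ inside $B$; the Artinian property alone does not contradict such a chain, since Artinian modules need not satisfy ACC. To finish, I would pass to Matlis duals: replacing $R$ by its completion $\widehat R$ (which preserves the hypotheses), the module $M := \Hom_R(A, E_R(R/\fm))$ is finitely generated over $\widehat R$, and the Frobenius action on $A$ dualizes to a ``Cartier-type'' $R$-linear map on $M$. Under this correspondence, the ascending chain $\{L_j\}$ in $B$ transfers to a descending chain of Cartier-compatible subquotients of $M$, and the Noetherian structure of $M$ combined with the compatibility with the Frobenius map forces termination, which back-translates to $L_1 = 0$. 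This duality step---extracting a finiteness theorem for nilpotent Frobenius actions on Artinian modules from the Noetherian property of their Matlis duals---is the technical heart of the classical Hartshorne--Speiser--Lyubeznik--Sharp result cited in the statement.
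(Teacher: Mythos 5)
The paper does not prove this statement at all; it is stated as a known result with citations to \cite{HS77}, \cite{L97}, and \cite{Sh07}, so there is no proof in the paper to compare against. Evaluating your proposal on its own merits, it contains two genuine gaps.

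First, $B_e := \im(F^e)$ is not an $R$-submodule of $A$. A Frobenius action is only $p^e$-linear, $F^e(ra) = r^{p^e}F^e(a)$, so $F^e(A)$ is closed under multiplication by $p^e$-th powers of elements of $R$ but not by arbitrary scalars. The Artinian hypothesis is a statement about $R$-submodules and therefore does not apply to your chain $\{B_e\}$. The standard repair is to consider the descending chain of $R$-submodules $R\cdot F^e(A)$; this does stabilize, but at the stable value $B$ one only gets that $F(B)$ \emph{generates} $B$ over $R$, not that $F|_B$ is surjective, which breaks your reduction to the single claim $L_1 = 0$. For the same reason, $L_j = K_j \cap B$ need not be an $R$-submodule of $A$, so the chain $\{L_j\}$ is not a chain of $R$-submodules either.

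Second, and more seriously, the Matlis duality finish is not merely sketchy but points in the wrong direction. Over a complete local ring, an ascending chain of $R$-submodules of the Artinian module $A$ corresponds under $\Hom_R(-, E_R(R/\fm))$ to a \emph{descending} chain of submodules of the Noetherian module $M := A^\vee$, and Noetherianness gives ACC, not DCC; so the ``Noetherian structure of $M$'' cannot by itself force that dualized chain to terminate. Moreover, the naive dual $F^\vee(\phi) = \phi\circ F$ is again $p$-linear, not $R$-linear, so one does not immediately get a ``Cartier-type'' endomorphism of $M$; to obtain an $R$-linear map $F_*M\to M$ one has to pass not just to the completion but to an $F$-finite ring (as the main theorem of the paper does via the $\Gamma$-construction). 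The actual proofs in \cite{HS77}, \cite{L97}, \cite{Sh07} then carry out a nontrivial argument specific to $p^{-1}$-linear operators on finitely generated modules. That step is the real content of the theorem, and the chain-stabilization observations you give cannot substitute for it.
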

\begin{definition}
\begin{enumerate}
\item Let $A$ be an Artinian $R$-module with a Frobenius action $F$.  The {\it Hartshorne-Speiser-Lyubeznik number} of $A$ is denoted by $\mathrm{HLS}(A)$ and is defined to be
$$\mathrm{HSL}(A)=\min\{e\mid 0^F_A = \ker (A \overset{F^e}{\longrightarrow} A)\}.$$
\item Notice that $H^i_{\fm}(R)$ is always Artinian for all $i \ge 0$. We define the {\it Hartshorne-Speiser-Lyubeznik number} of a local ring $(R, \frak m)$ as follows
$$\mathrm{HSL}(R): = \min \{ e \mid   0^F_{H^i_{\fm}(R)} =   \ker (H^i_{\fm}(R) \overset{F^e}{\longrightarrow} H^i_{\fm}(R)) \text{ for all } i = 0, \ldots, d\}.$$
\end{enumerate}
\end{definition}
 As mentioned in the introduction, Question \ref{Katzman Sharp} was answered affirmatively in the following cases:
\begin{enumerate}
\item (Katzman-Sharp) $R$ is Cohen-Macaulay ring. Moreover, $\mathrm{Fte}(R)= \mathrm{HSL}(R)$.
\item (Huneke-Katzman-Sharp-Yao) $R$ is generalized Cohen-Macaulay ring, i.e. $H^i_{\fm}(R)$ has finite length for all $i < d$.
\item (Quy) $R$ is weakly $F$-nilpotent ring, i.e. $H^i_{\fm}(R) = 0^F_{H^i_{\fm}(R)}$ for all $i < d$.
\item (Maddox) $R$ is generalized weakly $F$-nilpotent ring, i.e. $H^i_{\fm}(R) / 0^F_{H^i_{\fm}(R)}$ has finite length for all $i < d$.
\end{enumerate}
The main idea of the proofs of the affirmative cases is to make a connection between the Frobenius test exponent for parameter ideals with Hartshorne-Speiser-Lyubeznik numbers of the local cohomology modules. For example, if $R$ is weakly $F$-nilpotent, then the second author \cite{Q19} used the notion of relative Frobenius action on local cohomology to prove
$$\mathrm{Fte}(R) \le \sum_{i = 0}^d \binom{d}{i} \mathrm{HSL} (H^i_{\fm}(R)).$$
\subsection{The relative Frobenius action on local cohomology}
In this subsection, we recall the notion of relative Frobenius action on local cohomology which was introduced in \cite{PQ19} by Polstra and Quy in study $F$-nilpotent rings. Let $K\subseteq I$ be ideals of $R$. The Frobenius endomorphism $F: R/K \to R/K$ can be factored as composition of two natural maps:
$$R/K\to R/K^{[p]}\twoheadrightarrow R/K,$$
where the second map is the natural project map. We denote the first map by $F_R:R/K\to R/K^{[p]}$, $F_R(a+K)=a^p+K^{[p]}$ for all $a\in R$. The homomorphism $F_R$ induces {\it the relative Frobenius actions on local cohomology} $F_R: H^i_I(R/K)\to H^i_I(R/K^{[p]})$ via \v{C}ech complexes.
\begin{definition}
\begin{enumerate}
\item We define the {\it relative Frobenius closure of the zero submodule of $H^i_I(R/K)$ with respect to $R$} as follows
$$0^{F_R}_{H^i_I(R/K)}=\{\eta \mid F^e_R(\eta)=0\in H^i_I(R/K^{[p^e]})\text{ for some }e\gg 0\}.$$
\item If there is an integer $e$ such that
$$0^{F_R}_{H^i_I(R/K)}=\ker (H^i_I(R/K)\xrightarrow{F^e_R} H^i_I(R/K^{[p^e]})),$$
then we call the smallest of such integers the {\it Hartshorne-Speiser-Lyubeznik number of $H^i_I(R/K)$ with respect to $R$}, denoted by $\mathrm{HSL}_R(H^i_I(R/K))$. And convention that $\mathrm{HSL}_R(H^i_I(R/K))=\infty$ if we have no such integer.
\end{enumerate}
\end{definition}
\begin{lemma}\label{Fro0} 
Let $(R, \fm)$ be a Noetherian local ring, and $I$ an ideal of $R$. Then 
$$0^{F_R}_{H_{\fm}^0(R/I)}=I^F\cap (I:\fm^{\infty})/I.$$
\end{lemma}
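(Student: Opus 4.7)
The plan is a direct unwinding of the definitions. For any ideal $J \subseteq R$ we have $H^0_{\fm}(R/J) = \Gamma_{\fm}(R/J) = (J:\fm^{\infty})/J$ as a submodule of $R/J$; in particular $H^0_{\fm}(R/I) = (I:\fm^{\infty})/I$. The key preliminary step is to identify the relative Frobenius $F_R^e: H^0_{\fm}(R/I) \to H^0_{\fm}(R/I^{[p^e]})$ concretely as the map sending $\bar{x}$ to $\overline{x^{p^e}}$. Through the \v{C}ech complex the degree-zero term is just $R$, so the map on $H^0$ is induced by the ring map $R/I \to R/I^{[p^e]}$, $\bar{x} \mapsto \overline{x^{p^e}}$. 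It remains to verify that this map carries $(I:\fm^{\infty})/I$ into $(I^{[p^e]}:\fm^{\infty})/I^{[p^e]}$: if $\fm^n x \subseteq I$, then $(\fm^n)^{[p^e]} x^{p^e} = (\fm^n x)^{[p^e]} \subseteq I^{[p^e]}$, and since $(\fm^n)^{[p^e]}$ has radical $\fm$ it is $\fm$-primary, so $\fm^N \subseteq (\fm^n)^{[p^e]}$ for some $N$, which gives $\fm^N x^{p^e} \subseteq I^{[p^e]}$, as required.

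Armed with this concrete formula, the condition $F_R^e(\bar{x}) = 0$ in $H^0_{\fm}(R/I^{[p^e]})$ is equivalent to the elementwise condition $x^{p^e} \in I^{[p^e]}$. Taking the union over $e \geq 0$ and intersecting with the ambient module $(I:\fm^{\infty})/I$ yields
$$0^{F_R}_{H^0_{\fm}(R/I)} \;=\; \big\{\bar{x} \in (I:\fm^{\infty})/I \;:\; x \in I^F\big\} \;=\; \big(I^F \cap (I:\fm^{\infty})\big)/I,$$
which is the desired equality. No serious obstacle is anticipated here; the only step requiring mild attention is the well-definedness of $F_R^e$ on $H^0_{\fm}$, and this is exactly what the $\fm$-primary argument above handles. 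Everything else is a matter of reading off the definitions of $I^F$ and of $0^{F_R}$.
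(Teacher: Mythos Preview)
Your proposal is correct and follows the same approach as the paper: both arguments simply unwind the definitions, using the identification $H^0_{\fm}(R/J)=(J:\fm^{\infty})/J$ and the concrete description of $F_R^e$ as $\bar{x}\mapsto \overline{x^{p^e}}$. The paper's version is terser (it omits the well-definedness check you included), but the substance is identical.
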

\begin{proof}
Pick $a+I\in 0^{F_R}_{H_{\fm}^0(R/I)}$. We have $a\in I:\fm^{\infty}$ and there exists an integer $e$ such that $F^e(a+I)=a^{p^e}+I^{[p^e]}=0\in R/I^{[p^e]}$. Equivalently, $a+I\in I^F\cap (I:\fm^{\infty})/I $.
\end{proof}

\section{Main result}
Throughout this section, let $(R,\fm)$ be a Noetherian local ring of prime characteristic $p>0$ and of dimension $d$, and $t \le d$ an integer such that $H_{\fm}^j(R)/0^F_{H^j_{\fm}(R)}$ has finite length for all $j<t$. Let $x_1,\ldots,x_t$ be a filter regular sequence. Set $I_i=(x_1,\ldots,x_i)$ for all $i\leq t$ and $I=(x_1,\ldots,x_t)$.
\begin{lemma}\label{GF-nilWeak} 
Let $(R,\fm)$ be a local ring of prime characteristic $p>0$ and of dimension $d$, and $t \le d$ an integer such that $H_{\fm}^j(R)/0^F_{H^j_{\fm}(R)}$ has finite length for all $j<t$. Let $n_0$ be an non-negative integer such that $\fm^{n_0} H_{\fm}^j(R)/0^F_{H^j_{\fm}(R)} = 0$ for all $j < t$. Then for every filter regular sequence $x_1,\ldots,x_t$ we have
$$\fm^{2^in_0} H^j_{\fm}(R/I_i)\subseteq 0^{F_R}_{H^j_{\fm}(R/I_i)}$$  for all $i \le t$ and all $j<t-i$.
\end{lemma}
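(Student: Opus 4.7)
The plan is to prove the statement by induction on $i$, with the base case $i = 0$ being immediate: since $I_0 = (0)$, we have $H^j_\fm(R/I_0) = H^j_\fm(R)$ and $0^{F_R}_{H^j_\fm(R)} = 0^F_{H^j_\fm(R)}$, so the defining property of $n_0$ gives $\fm^{n_0} H^j_\fm(R) \subseteq 0^F_{H^j_\fm(R)}$ for all $j < t$. For the inductive step I would pass from $i-1$ to $i$ by exploiting the short exact sequence $0 \to R/(I_{i-1}:x_i) \xrightarrow{\cdot x_i} R/I_{i-1} \to R/I_i \to 0$. Because $x_1,\dots,x_t$ is filter regular, the quotient $(I_{i-1}:x_i)/I_{i-1}$ has finite length, so $H^j_\fm(R/(I_{i-1}:x_i)) \cong H^j_\fm(R/I_{i-1})$ for $j \ge 1$ (with only a controlled contribution at $j=0$), and the long exact sequence of local cohomology collapses, for every $j \ge 0$, to the short exact sequence
$$0 \to H^j_\fm(R/I_{i-1})/x_i H^j_\fm(R/I_{i-1}) \to H^j_\fm(R/I_i) \to \bigl(0 :_{H^{j+1}_\fm(R/I_{i-1})} x_i\bigr) \to 0. \qquad (\ast)$$

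Now take $\eta \in \fm^{2^i n_0} H^j_\fm(R/I_i)$ with $j < t-i$. Factoring $\fm^{2^i n_0} = \fm^{2^{i-1} n_0} \cdot \fm^{2^{i-1} n_0}$, it suffices (up to finite sums) to treat $\eta = a\zeta$ with $a \in \fm^{2^{i-1} n_0}$ and $\zeta \in \fm^{2^{i-1} n_0} H^j_\fm(R/I_i)$. The image $\overline{\zeta}$ of $\zeta$ in $(0:_{H^{j+1}_\fm(R/I_{i-1})} x_i) \subseteq H^{j+1}_\fm(R/I_{i-1})$ lies in $\fm^{2^{i-1} n_0} H^{j+1}_\fm(R/I_{i-1})$, and since $j+1 < t-(i-1)$ the inductive hypothesis yields $\overline{\zeta} \in 0^{F_R}_{H^{j+1}_\fm(R/I_{i-1})}$; fix $e_1$ with $F_R^{e_1}(\overline{\zeta}) = 0$. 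Using that $x_1^{p^{e_1}},\dots,x_t^{p^{e_1}}$ is again filter regular (part (4) of Lemma 2.2), the sequence $(\ast)$ also exists for the ideals $I_{i-1}^{[p^{e_1}]}, I_i^{[p^{e_1}]}$ and is natural under $F_R^{e_1}$; hence $F_R^{e_1}(\zeta)$ lies in the image of the leftmost term and lifts to some $\xi \in H^j_\fm(R/I_{i-1}^{[p^{e_1}]})$.

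Then $F_R^{e_1}(\eta) = a^{p^{e_1}} F_R^{e_1}(\zeta) = \pi(a^{p^{e_1}} \xi)$, where $\pi$ is the map $H^j_\fm(R/I_{i-1}^{[p^{e_1}]}) \to H^j_\fm(R/I_i^{[p^{e_1}]})$ induced by the projection. Since $a^{p^{e_1}} \in \fm^{2^{i-1} n_0}$, the element $a^{p^{e_1}} \xi$ belongs to $\fm^{2^{i-1} n_0} H^j_\fm(R/I_{i-1}^{[p^{e_1}]})$, and the inductive hypothesis applied to the filter regular sequence $x_1^{p^{e_1}},\dots,x_{i-1}^{p^{e_1}}$ places this element inside $0^{F_R}_{H^j_\fm(R/I_{i-1}^{[p^{e_1}]})}$; pick $e_2$ with $F_R^{e_2}(a^{p^{e_1}} \xi) = 0$. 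Because $\pi$ commutes with $F_R$, one deduces $F_R^{e_1+e_2}(\eta) = \pi(F_R^{e_2}(a^{p^{e_1}} \xi)) = 0$, so $\eta \in 0^{F_R}_{H^j_\fm(R/I_i)}$, completing the induction. The main obstacle I anticipate is the careful verification that $(\ast)$ is natural with respect to the relative Frobenius, so that applying $F_R^{e_1}$ genuinely produces a commutative diagram between the sequences for $(I_{i-1}, I_i)$ and $(I_{i-1}^{[p^{e_1}]}, I_i^{[p^{e_1}]})$; together with the modest bookkeeping needed at $j=0$ in the long exact sequence, this is the only non-formal point of the argument.
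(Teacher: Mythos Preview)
Your proposal is correct and follows essentially the same approach as the paper: induction on $i$, the short exact sequence $0 \to R/(I_{i-1}:x_i) \xrightarrow{x_i} R/I_{i-1} \to R/I_i \to 0$, the splitting $\fm^{2^in_0} = \fm^{2^{i-1}n_0}\cdot\fm^{2^{i-1}n_0}$, and a two-step use of the inductive hypothesis (first on the connecting map into $H^{j+1}_{\fm}(R/I_{i-1})$, then on the lift in $H^j_{\fm}(R/I_{i-1}^{[p^{e_1}]})$ for the filter regular sequence $x_1^{p^{e_1}},\dots,x_{i-1}^{p^{e_1}}$). The paper handles your anticipated obstacle by writing down the commutative diagram of short exact sequences of modules under $F_R^e$ and then passing to local cohomology, which gives the naturality of $(\ast)$ directly; your $j=0$ concern is harmless since the surjection $H^0_{\fm}(R/I_{i-1}) \twoheadrightarrow H^0_{\fm}(R/(I_{i-1}:x_i))$ ensures $(\ast)$ holds there as well.
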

\begin{proof}
We proceed by induction on $i$. The case $i=0$ is nothing to do.
Suppose $i>0$ and the assertion holds true for $i-1$. For each $e \ge 1$  we consider commutative diagram 
$$
\begin{CD}
0\longrightarrow R/(I_{i-1}:x_i) @>x_i>>  R/I_{i-1}  @>>> R/I_{i} @>>> 0 \\
@V(F^e_R)'VV @VF^e_RVV @VF^e_RVV (\star )\\
0\longrightarrow R/(I_{i-1}^{[p^e]}:x_i^{p^e}) @>x_i^{p^e}>> R/I_{i-1}^{[p^e]} @>>> R/I_{i}^{[p^e]} @>>> 0
\end{CD}
$$
where the first vertical map is the composition
$$R/(I_{i-1}:x_i)\xrightarrow{F^e_R}R/(I_{i-1}:x_i)^{[p^e]}\twoheadrightarrow R/(I_{i-1}^{[p^e]}:x_i^{p^e}). $$
Because $x_1^{p^e},\ldots,x_t^{p^e}$ is a filter regular sequence, $(I_{i-1}^{[p^e]}:x_i^{p^e})/I_{i-1}^{[p^e]}$ has finite length. Thus
$$H^{j+1}_{\fm}(R/(I_{i-1}^{[p^e]}:x_i^{p^e}))\cong H^{j+1}_{\fm}(R/(I_{i-1}:x_i)^{[p^e]})\cong H^{j+1}_{\fm}(R/I_{i-1}^{[p^e]}) $$
for all $j\geq 0$ and for all $e \ge 0$. Therefore the diagram $(\star)$ induces the following commutative diagram with exact rows
$$
\begin{CD}
\cdots\xrightarrow{x_i} H_{\fm}^j(R/I_{i-1}) @>>> H_{\fm}^j( R/I_{i})  @>\delta>>H_{\fm}^{j+1}( R/I_{i-1})  \\
@VF^e_R VV @VF^e_RVV @VF^e_RVV  \\
\cdots\xrightarrow{x_i^{p^e}} H_{\fm}^j(R/I_{i-1}^{[p^e]}) @>\alpha>> H_{\fm}^j(R/I_{i}^{[p^e]}) @>\beta>> H_{\fm}^{j+1}(R/I_{i-1}^{[p^e]}).
\end{CD}
$$
Pick any $u\in H^j_{\fm}(R/I_i)$, and $x,y\in \fm^{2^{i-1}n_0}$. Then $\delta(u)\in H^{j+1}_{\fm}(R/I_{i-1})$. By the inductive hypothesis $x\delta(u)\in 0^{F_R}_{H_{\fm}^{j+1}( R/I_{i-1})}$, so there is an integer $e$ such that 
$$0=F^e_R(x\delta(u))=\beta(F^e_R(xu)).$$
Hence there exists $v\in H_{\fm}^j(R/I_{i-1}^{[p^e]})$  such that $\alpha(v)=F^e_R(xu)$. Moreover, $y^{p^e}\in \fm^{2^{i-1}n_0}$ so $y^{p^e}v\in 0^{F_R}_{H_{\fm}^j(R/I_{i-1}^{[p^e]})}$ by using the induction for the sequence $x_1^{p^e}, \ldots, x_{i-1}^{p^e}$. Therefore,
$$\alpha(y^{p^e}v)=y^{p^e}.F^e_R(xu)=F^e_R(yxu)\in 0^{F_R}_{H_{\fm}^j(R/I_{i}^{[p^e]})}.$$
Leading to $xyu\in 0^{F_R}_{H_{\fm}^j( R/I_{i})}$. Hence $xy\in \Ann_R(H_{\fm}^j(R/I_i)/0^{F_R}_{H^j_{\fm}(R/I_i)})$, and so
$$\fm^{2^in_0}\subseteq \Ann_R(H_{\fm}^j(R/I_i)/0^{F_R}_{H^j_{\fm}(R/I_i)})$$ 
for all $i\leq t$ and $j<t-i$. The proof is complete.
\end{proof}

\begin{remark}\label{R0.10} 
Using the notation in Lemma \ref{GF-nilWeak}, and suppose $x_1 \ldots, x_t \in \fm^{2^tn_0}$. Then we have 
$$\im (H^j_{\fm}(R/I_{i-1}:x_i) \xrightarrow{x_i} H^j_{\fm}(R/I_{i-1}))\subseteq 0^{F_R}_{H^j_{\fm}(R/I_{i-1})}$$ for all $i \le t$ and $j \le t-i$. Indeed the case $1 \le j \le t-i$ follows from Lemma \ref{GF-nilWeak} and the fact $H^j_{\fm}(R/I_{i-1}:x_i) \cong H^j_{\fm}(R/I_{i-1})$. For $j=0$ we use Lemma \ref{GF-nilWeak} and the surjective map $H^0_{\fm}(R/I_{i-1}) \twoheadrightarrow H^0_{\fm}(R/I_{i-1}:x_i)$. In particular, every relative nilpotent element of the induced Frobenius action on $\mathrm{Coker} (H^j_{\fm}(R/I_{i-1}:x_i) \xrightarrow{x_i} H^j_{\fm}(R/I_{i-1}))$ is an image of some element in $0^{F_R}_{H^j_{\fm}(R/I_{i-1})}$.
\end{remark}
Using the above Remark and the argument of \cite[Proof of the main theorem]{Q19} and \cite[Theorem 3.1]{M19} we obtain the following whose proof is left to the reader.
\begin{proposition}\label{case dim zero}
Let $(R,\fm)$ be a local ring of prime characteristic $p>0$ and of dimension $d$, and $t \le d$ an integer such that $H_{\fm}^j(R)/0^F_{H^j_{\fm}(R)}$ has finite length for all $j<t$.  Then there exists a non-negative integer $e_0$ such that for every filter regular sequence $x_1,\ldots,x_t$ we have 
$$\mathrm{HSL}_R(H_{\fm}^0(R/(x_1, \ldots, x_t)))\leq\sum\limits_{k=0}^t\binom{t}{k}\mathrm{HSL}(H_{\fm}^k(R))+e_0.$$
\end{proposition}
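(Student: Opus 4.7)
The strategy is to first reduce to the case where every generator of the ideal lies in $\fm^{2^tn_0}$ — with $n_0$ the constant from Lemma \ref{GF-nilWeak} — so that Remark \ref{R0.10} is directly available, and then to run an induction on $i$ against the commutative diagram of long exact sequences already constructed in the proof of Lemma \ref{GF-nilWeak}. The constant $e_0$ in the statement will precisely measure the cost of that reduction and plays no further role.

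Fix $e_0$ with $p^{e_0} \ge 2^tn_0$. Given a filter regular sequence $x_1,\ldots,x_t$, set $y_i := x_i^{p^{e_0}}$ and $J_i := (y_1,\ldots,y_i)$; then $y_1,\ldots,y_t$ is again filter regular, each $y_i \in \fm^{2^tn_0}$, and $J_t = I^{[p^{e_0}]}$ with $I=(x_1,\ldots,x_t)$. Since $F^{e_0}_R \circ F^m_R = F^{e_0+m}_R$, the map $F^{e_0}_R \colon H^0_\fm(R/I) \to H^0_\fm(R/J_t)$ sends $0^{F_R}_{H^0_\fm(R/I)}$ into $0^{F_R}_{H^0_\fm(R/J_t)}$, which already yields
$$\mathrm{HSL}_R(H^0_\fm(R/I)) \le e_0 + \mathrm{HSL}_R(H^0_\fm(R/J_t)).$$
It therefore suffices to prove, by induction on $i$, the stronger inequality
$$\mathrm{HSL}_R(H^j_\fm(R/J_i)) \le \sum_{k=j}^{j+i}\binom{i}{k-j}\mathrm{HSL}(H^k_\fm(R))$$
for all $j \ge 0$ with $i+j \le t$; the case $(i,j)=(t,0)$ is what we need. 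The base $i=0$ reduces to $\mathrm{HSL}_R(H^j_\fm(R)) = \mathrm{HSL}(H^j_\fm(R))$, because the relative and absolute Frobenius actions coincide when the ideal is zero. For the inductive step, the long exact sequence attached to $0 \to R/(J_{i-1}:y_i) \xrightarrow{y_i} R/J_{i-1} \to R/J_i \to 0$, combined with the isomorphism $H^j_\fm(R/(J_{i-1}:y_i)) \cong H^j_\fm(R/J_{i-1})$ for $j\ge 1$ (coming from finite length of $(J_{i-1}:y_i)/J_{i-1}$), produces a Frobenius-compatible short exact sequence
$$0 \to C_{i,j} \to H^j_\fm(R/J_i) \to K_{i,j} \to 0,$$
with $C_{i,j}$ a quotient of $H^j_\fm(R/J_{i-1})$ and $K_{i,j}$ a submodule of $H^{j+1}_\fm(R/J_{i-1})$. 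Because $y_i \in \fm^{2^tn_0} \subseteq \fm^{2^{i-1}n_0}$, Remark \ref{R0.10} applies and forces every relative Frobenius nilpotent of $C_{i,j}$ to lift to an element of $0^{F_R}_{H^j_\fm(R/J_{i-1})}$, so $\mathrm{HSL}_R(C_{i,j}) \le \mathrm{HSL}_R(H^j_\fm(R/J_{i-1}))$; submodule monotonicity gives $\mathrm{HSL}_R(K_{i,j}) \le \mathrm{HSL}_R(H^{j+1}_\fm(R/J_{i-1}))$. The standard HSL-additivity diagram chase in a short exact sequence then yields $\mathrm{HSL}_R(H^j_\fm(R/J_i)) \le \mathrm{HSL}_R(H^j_\fm(R/J_{i-1})) + \mathrm{HSL}_R(H^{j+1}_\fm(R/J_{i-1}))$, and Pascal's identity $\binom{i}{k-j}=\binom{i-1}{k-j}+\binom{i-1}{k-j-1}$ closes the induction.

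The main obstacle is the reduction step itself: Remark \ref{R0.10} as stated only applies when every generator lives in $\fm^{2^tn_0}$, so one cannot avoid paying the additive cost $e_0$ in passing from $x_1,\ldots,x_t$ to its $p^{e_0}$-th powers, and one must verify that relative Frobenius nilpotence is preserved under $F^{e_0}_R$ with a loss of exactly $e_0$ and no more. A secondary care-point is that the ordinary additivity $\mathrm{HSL}(B) \le \mathrm{HSL}(A)+\mathrm{HSL}(C)$ for a short exact sequence must be verified in the relative setup, where the target of $F^e_R$ shifts from $R/K$ to $R/K^{[p^e]}$ at each application; this is the same diagram chase as in the absolute case once one tracks the Frobenius-shifted copies of $A$, $B$, $C$, and it is exactly the content being abbreviated by ``using the argument of \cite[Proof of the main theorem]{Q19} and \cite[Theorem 3.1]{M19}''.
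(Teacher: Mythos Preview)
Your proposal is correct and follows exactly the approach the paper has in mind: the paper does not write out a proof but simply points to Remark~\ref{R0.10} together with \cite[Proof of the main theorem]{Q19} and \cite[Theorem 3.1]{M19}, and what you have written is precisely the amalgam of those arguments---the $p^{e_0}$-th power reduction to land in $\fm^{2^tn_0}$ (the source of the additive $e_0$, as in \cite{M19}) followed by the inductive diagram chase on the Frobenius-compatible long exact sequence with the binomial bookkeeping (as in \cite{Q19}). One minor point worth making explicit in your write-up is that the inductive hypothesis must be stated uniformly over \emph{all} filter regular sequences with generators in $\fm^{2^tn_0}$, not just the fixed $y_1,\ldots,y_t$, since in the inductive step you need to apply it to the shifted sequence $y_1^{p^{e_1}},\ldots,y_{i-1}^{p^{e_1}}$ when bounding $\mathrm{HSL}_R$ of the $C$-piece after the first $e_1$ Frobenius iterates.
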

\begin{remark}\label{regular case} If $x_1, \ldots, x_t$ is a regular sequence then $H^j_{\fm}(R) = 0$ for all $j<t$, and the number $e_0$ in the previous result can be chosen as zero. Thus we have $\mathrm{HSL}_R(H_{\fm}^0(R/(x_1, \ldots, x_t)))\le \mathrm{HSL}(H^t_{\fm}(R))$.
\end{remark}
The following is known to experts.
\begin{lemma}\label{HSL locali} Let $(R, \fm)$ be an $F$-finite local ring of dimension $d$. Let $\fp \in \mathrm{Spec}(R)$ a prime ideal. Then for all $i \le d$ we have $\mathrm{HSL}(H^{i- \dim R/\fp}_{\fp R_{\fp}}(R_{\fp})) \le \mathrm{HSL}(H^i_{\fm}(R))$. In particular, $\mathrm{HSL}(R_{\fp}) \le \mathrm{HSL}(R)$.
\end{lemma}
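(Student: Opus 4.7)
The proof proceeds by translating the Hartshorne-Speiser-Lyubeznik number, via Matlis and local duality, into the stabilization index of a descending chain of submodules of a finitely generated module, and then observing that localization preserves this stabilization (and can only improve it).

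Completing at $\fm$ does not alter $H^i_\fm(R)$, its Frobenius action, or the property of being $F$-finite, so I may assume $R$ is complete. Being $F$-finite, $R$ admits a dualizing complex $\omega_R^\bullet$, and the $e$-th Frobenius $F^e:R\to F^e_*R$ is a finite ring map, to which Grothendieck duality assigns a natural trace
$$T^e:F^e_*\omega_R^\bullet\longrightarrow\omega_R^\bullet.$$
Local duality then identifies the $R$-linear Frobenius action $F^e: H^i_\fm(R) \to F^e_*H^i_\fm(R)$ with the Matlis dual of $T^e_i := H^{-i}(T^e):F^e_*H^{-i}(\omega_R^\bullet)\to H^{-i}(\omega_R^\bullet)$. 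Since Matlis duality is exact contravariant, the ascending chain of kernels $\ker(F^e)\subseteq H^i_\fm(R)$ corresponds to the descending chain of images $\im(T^e_i)\subseteq H^{-i}(\omega_R^\bullet)$, so
$$\mathrm{HSL}(H^i_\fm(R))=\min\{e\ge 0 : \im(T^e_i)=\im(T^{e+k}_i) \text{ for all } k\ge 0\}.$$

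Next I localize at $\fp$. Because $R$ is excellent (as it is $F$-finite), $(\omega_R^\bullet)_\fp$, after shifting by $s := \dim R/\fp$, is a dualizing complex for $R_\fp$, giving $H^{-i}(\omega_R^\bullet)_\fp \cong H^{-(i-s)}(\omega_{R_\fp}^\bullet)$. Since $F^e$ is finite, the functor $F^e_*$ and the formation of the Grothendieck trace both commute with localization, so $(T^e_i)_\fp$ is identified with the corresponding trace map for $R_\fp$. The characterization above applied to $R_\fp$ shows that $\mathrm{HSL}(H^{i-s}_{\fp R_\fp}(R_\fp))$ is the stabilization index of $\im((T^e_i)_\fp)$. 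Exactness of localization turns any equality $\im(T^e_i)=\im(T^{e+k}_i)$ in $H^{-i}(\omega_R^\bullet)$ into the analogous equality after localizing at $\fp$; hence this stabilization index can only decrease, giving
$$\mathrm{HSL}(H^{i-s}_{\fp R_\fp}(R_\fp)) \le \mathrm{HSL}(H^i_\fm(R)).$$
Taking the maximum over $i$ then yields the ``in particular'' assertion $\mathrm{HSL}(R_\fp)\le\mathrm{HSL}(R)$.

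The main technical obstacle is setting up the Matlis-dual description of Frobenius via Grothendieck duality for the finite Frobenius morphism, together with the correct bookkeeping of the shift $s$ when comparing $\omega_R^\bullet$ and $\omega_{R_\fp}^\bullet$. The $F$-finite hypothesis enters essentially in two places: it makes the Frobenius finite, so that the trace map exists and localizes well, and it guarantees the existence of a dualizing complex on $R$ to which local duality applies.
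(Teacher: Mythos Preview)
Your approach is essentially the same as the paper's: dualize the Frobenius action on local cohomology to obtain a map of finitely generated modules whose image stabilization index equals the HSL number, then localize. The paper carries this out concretely by invoking Gabber's theorem to write $R$ as a quotient of a regular local ring $(S,\mathfrak{n})$, so that the dual objects are the modules $\mathrm{Ext}^{n-i}_S(R,S)$; you use the dualizing complex $\omega_R^\bullet$ and the Grothendieck trace directly. These are two packagings of the same duality.

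There is, however, a genuine gap in your first reduction. You write ``Completing at $\fm$ \ldots\ I may assume $R$ is complete'' and then proceed to localize at $\fp$. But the statement to be proved involves $R_\fp$, and once you replace $R$ by $\widehat{R}$ the ideal $\fp\widehat{R}$ need not be prime; even choosing a prime $\fq$ of $\widehat{R}$ over $\fp$, there is no obvious comparison between $\mathrm{HSL}(H^{i-s}_{\fp R_\fp}(R_\fp))$ and $\mathrm{HSL}(H^{i-s}_{\fq\widehat{R}_\fq}(\widehat{R}_\fq))$ without further argument. So the reduction to the complete case is not free.

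The fix is simply to drop the completion step: an $F$-finite ring already admits a dualizing complex (Gabber) and is excellent, so everything you need is available on $R$ itself. Local duality then gives that the Matlis dual of $H^i_\fm(R)$ is $H^{-i}(\omega_R^\bullet)\otimes_R\widehat{R}$; since the cohomology sheaves $H^{-i}(\omega_R^\bullet)$ are finitely generated and completion is faithfully flat, the stabilization index of $\im(T^e_i)$ is unchanged by completion, and your argument goes through. This is exactly what the paper does implicitly by working with the (uncompleted) Ext modules over $S$ and localizing at the preimage $P\subset S$ of $\fp$.
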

\begin{proof} Since $(R, \fm)$ is $F$-finite, it is an image of some regular local ring $(S, \frak n)$ by Gabber \cite{G04}. Suppose that $\dim S = n$. For each $i \le d$, by the definition we have $\mathrm{HLS}(H^i_{\fm}(R))$ is the smallest non-negative integer $e$ such that 
$$\mathrm{Ker}(H^i_{\fm}(R) \xrightarrow{F^e} H^i_{\fm}(F^e_*R)) = \mathrm{Ker}(H^i_{\fm}(R) \xrightarrow{F^{e+1}} H^i_{\fm}(F^{e+1}_*R)).$$
By the local duality theorem we have
$$\mathrm{Coker}(\mathrm{Ext}^{n-i}_S(F^e_*R,S) \xrightarrow{(F^e)^{\vee}} \mathrm{Ext}^{n-i}_S(R,S)) = \mathrm{Coker}(\mathrm{Ext}^{n-i}_S(F^{e+1}_*R,S) \xrightarrow{(F^{e+1})^{\vee}} \mathrm{Ext}^{n-i}_S(R,S)).$$
Let $P$ be the preimage of $\fp$ in $S$. Taking localization we have
$$\mathrm{Coker}(\mathrm{Ext}^{n-i}_{S_P}(F^e_*R_{\fp},S_P) \to \mathrm{Ext}^{n-i}_{S_P}(R_{\fp},S_P)) = \mathrm{Coker}(\mathrm{Ext}^{n-i}_{S_P}(F^{e+1}_*R_{\fp},S_P) \to \mathrm{Ext}^{n-i}_{S_P}(R_{\fp},S_P)).$$
Applying the local duality theorem for $(R_{\fp}, \fp R_{\fp})$ we have 
$$\mathrm{Ker}(H^{i- \dim R/\fp}_{\fp R_{\fp}}(R_{\fp}) \xrightarrow{F^e} H^{i- \dim R/\fp}_{\fp R_{\fp}}(F^e_*R_{\fp})) = \mathrm{Ker}(H^{i- \dim R/\fp}_{\fp R_{\fp}}(R_{\fp}) \xrightarrow{F^{e+1}} H^{i- \dim R/\fp}_{\fp R_{\fp}}(F^{e+1}_*R_{\fp})).$$
Hence $\mathrm{HSL}(H^{i- \dim R/\fp}_{\fp R_{\fp}}(R_{\fp})) \le e$. The proof is complete.
\end{proof}
We are ready to prove the main result of this paper.
\begin{theorem}
 Let $(R,\fm)$ be a local ring of dimention $d$ and of prime characteristic $p>0$, and $t \le d$ an integer such that $H_{\fm}^j(R)/0^F_{H^j_{\fm}(R)}$ has finite length for all $j<t$. Then there exists an integer $C$ such that for any filter regular sequence $x_1,\ldots,x_t$ we have $\mathrm{Fte}(x_1, \ldots, x_t) \leq C$.
\end{theorem}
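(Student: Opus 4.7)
The plan is to derive the uniform bound $\mathrm{Fte}(I)\le C$ from Proposition~\ref{case dim zero} in two steps. First, for each $a\in I^F$, I push $a^{p^{e_1}}$ into the $\fm$-saturation of $I^{[p^{e_1}]}$ for a uniform exponent $e_1$, by controlling $\mathrm{Fte}_{R_\fp}(I_\fp)$ at every localization at $\fp\ne\fm$ via the Katzman--Sharp theorem at $R_\fp$ combined with Lemma~\ref{HSL locali}. Second, I apply Proposition~\ref{case dim zero} to the filter regular sequence $x_1^{p^{e_1}},\ldots,x_t^{p^{e_1}}$ and invoke Lemma~\ref{Fro0} to kill the resulting saturated class by a further uniform power of Frobenius.

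\textbf{Preliminary reduction.} By passing to the $\fm$-adic completion followed by the Hochster--Huneke Gamma construction, I may assume $R$ is $F$-finite; this preserves the finite-length hypothesis on $H^j_\fm(R)/0^F_{H^j_\fm(R)}$ for $j<t$, the filter regular property, and $\mathrm{Fte}$ of each extended ideal, so Lemma~\ref{HSL locali} applies.

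\textbf{Localization step.} Fix $\fp\in V(I)\setminus\{\fm\}$: by the filter regular property, $x_1,\ldots,x_t$ is a regular $R_\fp$-sequence. When $\fp$ is minimal over $I$, the ring $R_\fp$ is Cohen--Macaulay of dimension $t$ with parameter ideal $I_\fp$, and Katzman--Sharp gives $\mathrm{Fte}_{R_\fp}(I_\fp)=\mathrm{HSL}(R_\fp)\le\mathrm{HSL}(R)$ via Lemma~\ref{HSL locali}. At possibly embedded primes, an inductive application on $\dim R$ of the theorem (whose hypothesis descends to $R_\fp$ via Lemma~\ref{HSL locali}) yields an analogous bound. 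Let $e_1$ denote a uniform upper bound for $\mathrm{Fte}_{R_\fp}(I_\fp)$ over all $\fp\ne\fm$ and all filter regular sequences $x_1,\ldots,x_t$. Then for any $a\in I^F$ one has $a^{p^{e_1}}\in I_\fp^{[p^{e_1}]}$ for every $\fp\ne\fm$ (trivially when $\fp\not\supseteq I$), so $(I^{[p^{e_1}]}\colon a^{p^{e_1}})$ is not contained in any prime distinct from $\fm$, hence is $\fm$-primary; equivalently, the class of $a^{p^{e_1}}$ lies in $H^0_\fm(R/I^{[p^{e_1}]})$.

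\textbf{Saturated step and conclusion.} The sequence $x_1^{p^{e_1}},\ldots,x_t^{p^{e_1}}$ remains filter regular, so Proposition~\ref{case dim zero} furnishes a uniform constant $C_1$ with $\mathrm{HSL}_R(H^0_\fm(R/I^{[p^{e_1}]}))\le C_1$. Since $a\in I^F$, the class of $a^{p^{e_1}}$ is relatively Frobenius-nilpotent, so by Lemma~\ref{Fro0} it lies in $0^{F_R}_{H^0_\fm(R/I^{[p^{e_1}]})}$ and is killed by $F_R^{C_1}$. Therefore $a^{p^{e_1+C_1}}\in I^{[p^{e_1+C_1}]}$, and $C:=e_1+C_1$ works uniformly. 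The principal obstacle is the uniformity of $e_1$ across embedded primes (when they occur), where $R_\fp$ need not be Cohen--Macaulay so Katzman--Sharp does not apply directly; the remedy is a Noetherian induction on $\dim R$ together with Lemma~\ref{HSL locali}, which controls the $\mathrm{HSL}$-invariants of $R_\fp$ by those of $R$ and thus makes the bounds uniform in the filter regular sequence.
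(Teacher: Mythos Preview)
Your overall architecture matches the paper's: reduce to the $F$-finite case, push $a^{p^{e_1}}$ into the $\fm$-saturation of $I^{[p^{e_1}]}$ by localization arguments, then finish with Proposition~\ref{case dim zero} and Lemma~\ref{Fro0}. The difficulty is in the localization step, and your treatment of it has a genuine gap.

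You assert the existence of a single $e_1$ bounding $\mathrm{Fte}_{R_\fp}(I_\fp)$ simultaneously over \emph{all} $\fp\neq\fm$ and \emph{all} filter regular sequences. At primes minimal over $I$ your Katzman--Sharp argument is fine. At non-minimal $\fp\in V(I)\setminus\{\fm\}$ you invoke ``an inductive application on $\dim R$ of the theorem''. But the inductive hypothesis only says that for each fixed local ring $S$ of dimension $<d$ there \emph{exists} a constant $C_S$; it does not say $C_S$ is a function of $\mathrm{HSL}(S)$ and $\dim S$ alone. So even granting $\mathrm{HSL}(R_\fp)\le\mathrm{HSL}(R)$ from Lemma~\ref{HSL locali}, you have not shown $\sup_{\fp} C_{R_\fp}<\infty$, where the supremum ranges over infinitely many primes as both $\fp$ and the sequence vary. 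To make your induction run you would have to strengthen the statement to produce an \emph{explicit} bound $C=C(d,t,\mathrm{HSL}(R),e_0)$ and then verify that the recursion $C(d,\ldots)\ge C(d-1,\ldots)+C_1$ closes up; this is doable (since at $R_\fp$ the sequence is regular, so $e_0(R_\fp)=0$ by Remark~\ref{regular case}), but you have not carried it out.

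The paper sidesteps this entirely by inducting not on $\dim R$ but on $s=\dim R/(I:a)$ for a \emph{fixed} $a\in I^F$. At each stage one only looks at primes $\fp$ \emph{minimal} in $\Supp(R/(I:a))$ of maximal dimension $s$; minimality forces $a+IR_\fp\in H^0_{\fp R_\fp}(R_\fp/IR_\fp)$ automatically, so one never needs the full $\mathrm{Fte}(I_\fp)$ but only the bound $\mathrm{HSL}_{R_\fp}(H^0_{\fp R_\fp}(R_\fp/I_\fp))\le \mathrm{HSL}(H^t_{\fp R_\fp}(R_\fp))\le h$ coming directly from Remark~\ref{regular case} and Lemma~\ref{HSL locali}. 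One application of $F^h$ then drops $s$ by one, and after at most $d-t$ iterations (since $s\le d-t$) one lands in the $\fm$-primary situation where Proposition~\ref{case dim zero} applies. This gives the clean explicit bound $\mathrm{Fte}(I)\le (d-t)h+c$ with no appeal to an inductive hypothesis about other rings.
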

\begin{proof} By passing to the completion we can assume that $(R, \fm)$ is complete. We next use the $\Gamma$-construction of Hochster and Huneke \cite{HH94} to obtain a faithfully flat extension $R \to R^{\Gamma}$ such that $R^{\Gamma}$ is $F$-finite and $\fm R^{\Gamma}$ is the maximal ideal of $R^{\Gamma}$. Notice that the induced Frobenius actions on $H^j_{\fm}(R)/0^F_{H^j_{\fm}(R)}$ are injecitve for all $j \ge 0$. By \cite[Lemmas 2.9 and 4.3]{EH07} we can choose a sufficiently small $\Gamma$ such that the Frobenius actions on 
$R^{\Gamma} \otimes_R H^j_{\fm}(R)/0^F_{H^j_{\fm}(R)}$ are injecitve for all $j \ge 0$. Thus $0^F_{H^j_{\fm}(R^{\Gamma})} \cong R^{\Gamma} \otimes_R 0^F_{H^j_{\fm}(R)}$. Since $(x_1, \ldots, x_t)^F R^{\Gamma} \subseteq ((x_1, \ldots, x_t)R^{\Gamma})^F$, it is enough to prove the requirement for $R^{\Gamma}$. Therefore we can assume henceforth that $(R,\fm)$ is $F$-finite.

 Let $c$ be the upper bound in Proposition \ref{case dim zero} and $h: = \mathrm{HSL}(R)$. Let $I = (x_1, \ldots, x_t)$, and pick any $a \in I^F \setminus I$. Suppose $\dim R/(I:a) = s$. We will prove $\dim R/(I^{[p^{hs}]}: a^{p^{hs}}) \le 0$ by induction on $s$. There is nothing to do if $s = 0$. Suppose $s >0$, and let $\fp$ be any minimal prime of $R/(I:a)$ with $\dim R/\fp = s$. Then $aR_{\fp} \in (IR_{\fp})^F$ by \cite[Lemma 3.3]{QS17}. By the minimality we have $a + IR_{\fp} \in H^0_{\fp R_{\fp}}(R_{\fp}/IR_{\fp})$. Since $x_1, \ldots, x_t$ becomes a regular sequence in $R_{\fp}$ we have $\mathrm{HSL}_{R_{\fp}} (H^0_{\fp R_{\fp}}(R_{\fp}/IR_{\fp})) \le \mathrm{HSL} (H^t_{\fp R_{\fp}}(R_{\fp})) \le h$ by Remark \ref{regular case} and Lemma \ref{HSL locali}. Therefore $F^h_{R_{\fp}} (a + IR_{\fp}) = 0 \in R_{\fp}/I^{[p^h]}R_{\fp}$, and so $a^{p^h}R_{\fp} \in I^{[p^h]}R_{\fp}$. Hence $R_{\fp}/(I^{[p^h]}: a^{p^h})R_{\fp} = 0$ for all $\fp \in \mathrm{Min}(R/(I:a))$ with $\dim R/\fp = s$. Thus $\dim R/(I^{[p^h]}: a^{p^h}) \le s-1$. The claim now follows from the inductive hypothesis for $I^{[p^h]}$ and $a^{p^h}$. 
  
On the other hand $s \le d-t$, so we always have $\dim R/(I^{[p^{(d-t)h}]}: a^{p^{(d-t)h}}) \le 0$. Moreover,
 if $\dim R/(I: a) \le 0$, then $\bar{a}=a+I\in I^F\cap (I:\fm^{\infty})/I=0^{F_R}_{H_{\fm}^0(R/I)}$ by Lemma \ref{Fro0}. By Proposition \ref{case dim zero} we have $F^c_R(\bar{a})=0$. Thus $a^{p^c} \in I^{[p^c]}$. Putting all together we have $a^{p^{(d-t)h + c}} \in I^{[p^{(d-t)h + c}]}$ for all $a \in I^F$, so $\mathrm{Fte}(I) \le (d-t)h + c$. The proof is complete.
\end{proof}
\begin{acknowledgement} The authors are grateful to the referee for carefully reading of the paper and valuable suggestions and comments. The first author was funded by Vingroup Joint Stock Company and supported by the Domestic Master/ PhD Scholarship Programme of Vingroup Innovation Foundation (VINIF), Vingroup Big Data Institute (VINBIGDATA).
\end{acknowledgement}

\end{document}